\newtheorem{theorem}{Theorem}[section]
\newtheorem{corollary}[theorem]{Corollary}
\theoremstyle{definition}
\theoremstyle{remark}
\newtheorem{remark}[theorem]{Remark}
\numberwithin{equation}{section}
\begin{document}

\title{A relative Toponogov comparison theorem}

%    Information for first author
\author{}
%    Address of record for the research reported here
\address{}
%    Current address
\curraddr{}
\email{}
%    \thanks will become a 1st page footnote.
\thanks{}

%    Information for second author
\author{Jianming Wan}
\address{School of Mathematics, Northwest University, Xi'an 710127, China}
\email{wanj\_m@aliyun.com}
\thanks{The author is supported by Natural Science Foundation of Shaanxi Province of China (No.2022JM-010) and Shaanxi Fundamental Science Research Project for Mathematics and Physics (No.22JSY025)}

%    General info
\subjclass[2010]{53C20,53C23}

\date{April 12, 2023}

\dedicatory{}

\keywords{Toponogov comparison theorem, curvature bounded below}

\begin{abstract}
We present a relative form of the Toponogov comparison theorem.
\end{abstract}
\maketitle

%% The correct journal style for \specialsection is all uppercase; a known bug
%% in amsart.cls prevents this, so input must be uppercase until it is fixed.
%\specialsection*{This is a Special Section Head}

%%%%%%%%%%%%%%%%%%%%%%%%%%%%%%%%%%%%%%%%%%%%%%%%%%%%%%%%%%%%%%%%%%%%%%%%

%%%%%%%%%%%%%%%%%%%%%%%%%%%%%%%%%%%%%%%%%%%%%%%%%%%%%%%%%%%%%%%%%%%%%%%%

\section{Main result}
Let $M$ be a complete Riemannian manifold with sectional curvature $sec_{M}\geq k, (k=0, -1, 1)$ and $S^{n}_{k}$ the simply connected spaces form. Let $\gamma_{1}, \gamma_{2}$ be two geodesic segments in $M$ such that $\gamma_{1}(0)=\gamma_{2}(0)$ and $\bar{\gamma}_{1}, \bar{\gamma}_{2}$ be two geodesic segments in $S^{n}_{k}$ such that $\bar{\gamma}_{1}(0)=\bar{\gamma}_{2}(0)$. $\measuredangle(\gamma_{1}^{'}(0), \gamma_{2}^{'}(0))=\measuredangle(\bar{\gamma}_{1}^{'}(0), \bar{\gamma}_{2}^{'}(0))=\alpha$. Let $r(t)=d_{M}(\gamma_{1}(b), \gamma_{2}(t))$ and $\bar{r}(t)=d_{S^{n}_{k}}(\bar{\gamma}_{1}(b), \bar{\gamma}_{2}(t))$.  Toponogov comparison theorem \cite{[T]} claims that $$r(t)\leq\bar{r}(t).$$
It is an important tool in Riemannian geometry. One can find its proof in some standard textbooks of Riemannian geometry, such as \cite{[CE]} and \cite{[P]}.

We shall prove the following relative form of Toponogov comparison theorem.
\begin{theorem}
(A) The distance ratio $$t\mapsto\frac{r(t)}{\bar{r}(t)}$$ is a non-increasing function for i): $t\leq b$ when $k=0$; ii): $t\leq b\leq \pi/2$  when $k=1$.

(B) The distance difference $$\psi(t)=\bar{r}(t)-r(t)$$ is a non-decreasing function for i): $t\leq b$ when $k=0,-1$; ii): $t\leq b\leq \pi/2$  when $k=1$.
\end{theorem}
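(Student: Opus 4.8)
The plan is to reduce both statements to a comparison of the $t$-derivatives $r'(t)$ and $\bar r'(t)$, and then to compute these derivatives explicitly in the model space while controlling the manifold side through Toponogov's theorem itself. Since $\gamma_1(0)=\gamma_2(0)$ gives $r(0)=\bar r(0)=b$, part (A) is the assertion that $(\log r-\log\bar r)'=\frac{r'}{r}-\frac{\bar r'}{\bar r}\le 0$, and part (B) is the assertion that $\psi'(t)=\bar r'(t)-r'(t)\ge 0$; so everything rests on pointwise derivative inequalities. Here $r$ is only locally Lipschitz, so at parameters where a minimizer from $\gamma_1(b)$ to $\gamma_2(t)$ is non-unique I would argue with the one-sided Dini derivatives, which still suffices to conclude monotonicity of a continuous function; I suppress this technical point below.

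First I would apply the first variation of arc length. Let $\beta(t)$ be the angle at the vertex $\gamma_2(t)$ of the geodesic triangle with vertices $O:=\gamma_1(0)=\gamma_2(0)$, $\gamma_1(b)$, $\gamma_2(t)$, measured between the minimizing geodesic to $O$ and the minimizing geodesic to $\gamma_1(b)$. The first variation formula gives $r'(t)=\cos\beta(t)$, and the identical computation in $S^n_k$ gives $\bar r'(t)=\cos\bar\beta(t)$, where $\bar\beta(t)$ is the corresponding angle in the hinge $(\bar O,\bar\gamma_1(b),\bar\gamma_2(t))$. Next I form, in $S^n_k$, the comparison triangle on the three side lengths $b$, $t$, $r(t)$; call its angle at the $O$-vertex $\tilde\alpha(t)$ and at the $\gamma_2(t)$-vertex $\tilde\beta(t)$. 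Because $\sec_M\ge k$, Toponogov's angle comparison yields $\beta(t)\ge\tilde\beta(t)$, hence $r'(t)=\cos\beta(t)\le\cos\tilde\beta(t)$; and since $r(t)\le\bar r(t)$, the model law of cosines with the two sides $b,t$ held fixed forces $\tilde\alpha(t)\le\alpha$.

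At this stage the problem is purely trigonometric in $S^n_k$. Fix $p=b$ and $q=t$, and regard the opposite side $\rho(\omega)$ and the $q$-vertex angle $\beta(\omega)$ as functions of the included angle $\omega$, via the law of cosines ($\rho^2=p^2+q^2-2pq\cos\omega$ for $k=0$; $\cos\rho=\cos p\cos q+\sin p\sin q\cos\omega$ for $k=1$; $\cosh\rho=\cosh p\cosh q-\sinh p\sinh q\cos\omega$ for $k=-1$). Then $\bar r(t)=\rho(\alpha)$, $r(t)=\rho(\tilde\alpha)$, $\bar r'(t)=\cos\beta(\alpha)$ and $\cos\tilde\beta(t)=\cos\beta(\tilde\alpha)$. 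Combining with the two facts from the previous step, namely $\cos\beta(t)\le\cos\beta(\tilde\alpha)$ and $\tilde\alpha\le\alpha$, part (B) follows once I show that $\omega\mapsto\cos\beta(\omega)$ is non-decreasing, since then $r'(t)=\cos\beta(t)\le\cos\beta(\tilde\alpha)\le\cos\beta(\alpha)=\bar r'(t)$; and part (A) follows once I show that $\omega\mapsto\frac{\cos\beta(\omega)}{\rho(\omega)}$ is non-decreasing, since then $\frac{r'}{r}\le\frac{\cos\beta(\tilde\alpha)}{\rho(\tilde\alpha)}\le\frac{\cos\beta(\alpha)}{\rho(\alpha)}=\frac{\bar r'}{\bar r}$.

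The main obstacle, and the only genuine computation, is verifying these two monotonicities and reading off their sign conditions. In the flat case one finds $\cos\beta(\omega)=\frac{q-p\cos\omega}{\rho}$, whence $\frac{\partial\cos\beta}{\partial\omega}=\frac{p^2\sin\omega\,(p-q\cos\omega)}{\rho^3}$ and $\frac{\partial}{\partial\omega}\frac{\cos\beta}{\rho}=\frac{p\sin\omega\,(p^2-q^2)}{\rho^4}$; both are $\ge 0$ precisely because $q=t\le b=p$ (note $p-q\cos\omega\ge p-q\ge 0$), which is exactly hypothesis (i). The spherical and hyperbolic cases proceed in the same spirit: differentiating the respective law of cosines in $\omega$ produces a factor whose sign must be pinned down, and this is where $b\le\pi/2$ enters for $k=1$ (to keep quantities such as $\cos b$ of the right sign), while for $k=-1$ the monotonicity of $\frac{\cos\beta}{\rho}$ mixes $\rho$ with $\sinh\rho$ and is not available, consistent with part (A) being asserted only for $k=0,1$. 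I would therefore expect the final write-up to amount to three short differentiations, organized so that the flat computation above is the template and the curved cases are its direct analogues.
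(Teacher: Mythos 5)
Your proposal is correct and is essentially the paper's own argument: both reduce via first variation to the pointwise inequalities $r'=\cos\beta\le\cos\bar\beta=\bar r'$ (resp.\ $r'/r\le \bar r'/\bar r$), both replace $\cos\beta$ by the comparison-triangle value $\cos\tilde\beta$ using Toponogov (your angle-comparison step $\beta\ge\tilde\beta$ is exactly the paper's hinge bound $\cos\beta\le\frac{r^2+t^2-b^2}{2rt}$ and its curved analogues), and both finish with an elementary model-space monotonicity in which $t\le b$ (and $b\le\pi/2$ for $k=1$) enters with the same role --- indeed, since $\rho$ is increasing in the hinge angle $\omega$ with $b,t$ fixed, your monotonicity of $\cos\beta(\omega)$ and $\cos\beta(\omega)/\rho(\omega)$ in $\omega$ is the same computation the paper organizes as monotonicity in the side length, via functions such as $\frac{1-\cos x}{x\sin x}$. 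The only divergences are cosmetic: you treat nonsmooth points by Dini derivatives where the paper uses Petersen's perturbation trick, and your spherical and hyperbolic differentiations, though left as ``expected,'' do go through exactly as you predict (for instance, for $k=1$ one finds $\partial_\omega\cos\beta$ carries the factor $\cos t-\cos b\cos\rho\ge 0$ under $t\le b\le\pi/2$, and for $k=-1$ the ratio monotonicity reverses, matching the paper's remark that (A) fails there by this method).
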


The word ``relative" comes from Bishop-Gromov's relative volume comparison. The proof of Theorem 1.1 is based on frequent use of Toponogov comparison theorem.

One can see that in case $k=0, 1$, conclusion (A) implies (B). For $t_{1}<t_{2}$, the non-increasing of $\frac{r(t)}{\bar{r}(t)}$ and $r(t)\leq\bar{r}(t)$ lead to $$\frac{r(t_{2})-r(t_{1})}{\bar{r}(t_{2})-\bar{r}(t_{1})}\leq \frac{r(t_{2})}{\bar{r}(t_{2})}\leq 1.$$
It follows that $\bar{r}(t_{2})-r(t_{2})\geq\bar{r}(t_{1})-r(t_{1})$.

In conclusion (A), the present method can not work for the case $k=-1$.

\section{a proof of main result}
Let $\gamma_{3}$ be a geodesic segment from $\gamma_{1}(b)$ to $\gamma_{2}(t)$ and $\bar{\gamma}_{3}$ be the geodesic segment from $\bar{\gamma}_{1}(b)$ to $\bar{\gamma}_{2}(t)$. Write $$\beta=\measuredangle(\gamma_{2}^{'}(t), \gamma_{3}^{'}(r(t)), \ \bar{\beta}=\measuredangle(\bar{\gamma}_{2}^{'}(t), \bar{\gamma}_{3}^{'}(r(t)).$$
If $\gamma_{2}(t)$ is not a cut point of $\gamma_{1}(b)$, then there exists $\delta>0$ such that $r(t)$ is smooth on $(t-\delta,t+\delta)$. The first variation formula yields $$r^{'}(t)=\cos \beta, \ \bar{r}^{'}(t)=\cos \bar{\beta}.$$
In case $k=0, 1$, we would show $$(\frac{r(t)}{\bar{r}(t)})^{'}=\frac{r^{'}\bar{r}-r\bar{r}^{'}}{\bar{r}^{2}}=\frac{1}{\bar{r}^{2}}(\bar{r}\cos\beta-r\cos\bar{\beta})\leq0.$$
Equivalently, $$r\cos\bar{\beta}-\bar{r}\cos\beta\geq0.$$ And in case $k=-1$, we would show $$\psi^{'}(t)=\bar{r}^{'}(t)-r^{'}(t)=\cos\bar{\beta}-\cos\beta\geq0.$$

\textbf{(1) Case $k=0$.} The law of Cosines gives  $$\cos\bar{\beta}=\frac{\bar{r}^{2}+t^{2}-b^{2}}{2\bar{r}t}.$$
From Toponogov comparison theorem, we have $b^{2}\leq r^{2}+t^{2}-2rt\cos\beta$. This implies $$\cos\beta\leq\frac{r^{2}+t^{2}-b^{2}}{2rt}.$$
Then
$$r\cos\bar{\beta}-\bar{r}\cos\beta\geq\frac{\bar{r}^{2}-r^{2}}{2r\bar{r}t}(b^{2}-t^{2})\geq0,$$
when $t\leq b$.

\textbf{(2) Case $k=1$.} The law of Cosines gives  $$\cos\bar{\beta}=\frac{\cos b-\cos\bar{r}\cos t}{\sin\bar{r} \sin t}.$$
From Toponogov comparison theorem, we have $\cos b\geq \cos r\cos t+\sin r \sin t\cos\beta$. This implies $$\cos\beta\leq\frac{\cos b-\cos r\cos t}{\sin r \sin t}.$$
Then
\begin{eqnarray*}
r\cos\bar{\beta}-\bar{r}\cos\beta &\geq&\frac{1}{\sin\bar{r} \sin r\sin t}[(\bar{r}\sin\bar{r}\cos r-r\sin r\cos\bar{r})\cos t+(r\sin r-\bar{r}\sin\bar{r})\cos b]\\
&\geq& \frac{\cos b}{\sin\bar{r} \sin r\sin t}(\bar{r}\sin\bar{r}\cos r-r\sin r\cos\bar{r}+r\sin r-\bar{r}\sin\bar{r})\\
&=& \frac{r\bar{r}\cos b}{\sin t}(\frac{1-\cos \bar{r}}{\bar{r}\sin \bar{r}}-\frac{1-\cos r}{r\sin r})\\
&\geq& 0.
\end{eqnarray*}
when $t\leq b\leq\pi/2$.

The second $``\geq"$ holds because the function $\phi(\bar{r})=\bar{r}\sin\bar{r}\cos r-r\sin r\cos\bar{r}\geq0$ for $0<r\leq\bar{r}\leq\pi$.  To see this, we write $$\phi(\bar{r})=r\bar{r}\sin r\sin\bar{r}(\frac{\cos r}{r\sin r}-\frac{\cos \bar{r}}{\bar{r}\sin \bar{r}}).$$ Since the function $\frac{\cos t}{t\sin t}$ is decreasing for $0<t\leq\pi$, we have $\phi(\bar{r})\geq0$.

The third $``\geq"$ holds because $f(t)=\frac{1-\cos t}{t\sin t}$ is increasing ($f^{'}(t)>0$) for $0<t<\pi$.

\textbf{(3) Case $k=-1$.} The law of Cosines gives  $$\cos\bar{\beta}=\frac{\cosh\bar{r}\cosh t-\cosh b}{\sinh\bar{r} \sinh t}.$$
From Toponogov comparison theorem, we have $\cosh b\leq \cosh r\cosh t-\sinh r \sinh t\cos\beta$. This implies $$\cos\beta\leq\frac{\cosh r\cosh t-\cosh b}{\sinh r \sinh t}.$$
Then
\begin{eqnarray*}
\cos\bar{\beta}-\cos\beta&\geq&\frac{1}{\sinh\bar{r} \sinh r\sinh t}[\sinh(r-\bar{r})\cosh t+(\sinh\bar{r}-\sinh r)\cosh b]\\
&\geq& \frac{\cosh b}{\sinh\bar{r} \sinh r\sinh t}[\sinh(r-\bar{r})+\sinh\bar{r}-\sinh r],
\end{eqnarray*}
when $t\leq b$. One is easy to see that the function $$f(\bar{r})=\sinh(r-\bar{r})+\sinh\bar{r}-\sinh r$$ satisfies $f(r)=0, \frac{df}{d\bar{r}}\geq0$ for $\bar{r}\geq r$. So $f(\bar{r})\geq 0$. Hence $$\cos\bar{\beta}-\cos\beta\geq0.$$

Unfortunately, we can not obtain the conclusion (A) when $k=-1$. In this situation,
$$r\cos\bar{\beta}-\bar{r}\cos\beta \geq \frac{r\bar{r}\cosh b}{\sinh t}(\frac{\cosh \bar{r}-1}{\bar{r}\sinh \bar{r}}-\frac{\cosh r-1}{r\sinh r}).$$
Since $\frac{\cosh t-1}{t\sinh t}$ is decreasing, the right hand is less than 0.

 If $\gamma_{2}(t)$ is a cut point of $\gamma_{1}(b)$, there would be more than one geodesic segment from $\gamma_{1}(b)$ to $\gamma_{2}(t)$.  From Petersen \cite{[P]} (Page 224, Exercise 5.9.28.), the right-hand derivative  $$r^{'}_{+}(t)=\min \cos\beta$$  and left-hand derivative $$r^{'}_{-}(t)=\max \cos\beta.$$
By the calculation of above three cases, we have $$(\frac{r(t)}{\bar{r}(t)})^{'}_{+}\leq0,\ (\frac{r(t)}{\bar{r}(t)})^{'}_{-}\leq0$$ and $$\psi^{'}_{+}(t)\geq0,\ \psi^{'}_{-}(t)\geq0.$$

To sum up, whether or not $r(t)$ is smooth, we always have $(\frac{r(t)}{\bar{r}(t)})^{'}_{+}\leq0,\ (\frac{r(t)}{\bar{r}(t)})^{'}_{-}\leq0$ and $\psi^{'}_{+}(t)\geq0,\ \psi^{'}_{-}(t)\geq0.$
Then we can complete the proof of Theorem 1.1 from the fact (see Miller-Vyborny \cite{[MV]}): Let $f$ be a continuous function on $[a,b]$. If for each $x\in (a,b)$ one of the one-sided derivative $f^{'}_{+}$ or $f^{'}_{-}$ exists, and is nonnegative (possibly $+\infty$), then $f$ is monotonic increasing.

\begin{remark}
One may think that the restriction $t\leq b$ in Theorem 1.1 is not necessary. But the proof shows that non-decreasing of $\psi(t)$ is equivalent to $\beta\geq\bar{\beta}$. It seems that no reason makes this true globally.
\end{remark}

\begin{remark}
If $t>b$, we can compare along $\gamma_{1}$. We denote $r(t,s)=d_{M}(\gamma_{1}(t), \gamma_{2}(s))$ and $\bar{r}(t,s)=d_{S^{n}_{k}}(\bar{\gamma}_{1}(t), \bar{\gamma}_{2}(s))$. Then conclusion (A) in Theorem 1.1 can be written as $$\frac{r(t, s_{1})}{\bar{r}(t, s_{1})}\geq\frac{r(t, s_{2})}{\bar{r}(t, s_{2})}$$ when $s_{1}< s_{2}\leq t$ and
$$\frac{r(t_{1}, s)}{\bar{r}(t_{1}, s)}\geq\frac{r(t_{2}, s)}{\bar{r}(t_{2}, s)}$$ when $t_{1}< t_{2}\leq s$.

Denote $\psi(t,s)=\bar{r}(t,s)-r(t,s)$. Conclusion (B) says
 $$\psi(t,s_{1})\leq\psi(t,s_{2})$$ when $s_{1}< s_{2}\leq t$ and  $$\psi(t_{1},s)\leq\psi(t_{2},s)$$ when $t_{1}< t_{2}\leq s$.

So Theorem 1.1 can be flexible for some possible applications.
\end{remark}

\section{start point free case}
In this section we consider the relative Toponogov comparison theorem when the start point is free. Now we set $r^{*}(t)=d_{M}(\gamma_{1}(t), \gamma_{2}(t))$ and $\bar{r}^{*}(t)=d_{S^{n}_{k}}(\bar{\gamma}_{1}(t), \bar{\gamma}_{2}(t))$. Here $\gamma_{1}(t), \gamma_{2}(t), \bar{\gamma}_{1}(t), \bar{\gamma}_{2}(t)$ are same to that in Section 1.

Then we have
\begin{theorem}
(A) The distance ratio $$t\mapsto\frac{r^{*}(t)}{\bar{r}^{*}(t)}$$ is a non-increasing function for i): $t\geq0$ when $k=0$; ii): $t\leq\pi/2$  when $k=1$.

(B) The distance difference $$\psi^{*}(t)=\bar{r}^{*}(t)-r^{*}(t)$$ is  a non-decreasing function for i): $t\geq0$ when $k=0,-1$; ii): $t\leq \pi/2$  when $k=1$.
\end{theorem}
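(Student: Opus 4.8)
The plan is to reduce Theorem 3.1 to the one--sided hinge inequalities already established in Section 2, by differentiating $r^{*}$ and $\bar{r}^{*}$ along the diagonal $s=t$ and splitting the derivative into one contribution from each of the two moving endpoints.

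First I would fix the geometric picture. For each $t$ the points $\gamma_{1}(0)=\gamma_{2}(0)$, $\gamma_{1}(t)$, $\gamma_{2}(t)$ form an isosceles hinge in $M$: the two sides from the common vertex have equal length $t$ and enclose the angle $\alpha$, while the minimizing geodesic $\gamma_{3}$ from $\gamma_{1}(t)$ to $\gamma_{2}(t)$ is the base, of length $r^{*}(t)$. Write $\beta_{1},\beta_{2}$ for the base angles of this triangle at $\gamma_{1}(t)$ and $\gamma_{2}(t)$. The comparison configuration in $S^{n}_{k}$ is the genuinely isosceles triangle with sides $t,t$, apex angle $\alpha$ and base $\bar{r}^{*}(t)$; the reflection of $S^{n}_{k}$ across the bisector of the hinge shows that its two base angles are equal, say both $\bar{\beta}$. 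This single symmetric comparison triangle is the crux: it serves simultaneously as the Section 2 comparison triangle when we freeze $\gamma_{1}(t)$ and slide along $\gamma_{2}$, and as the one when we freeze $\gamma_{2}(t)$ and slide along $\gamma_{1}$.

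Next I would differentiate. Where $r^{*}$ is smooth, the first variation formula applied at both endpoints gives $$(r^{*})'(t)=\cos\beta_{1}+\cos\beta_{2},\qquad (\bar{r}^{*})'(t)=2\cos\bar{\beta},$$ the second identity using $\bar{\beta}_{1}=\bar{\beta}_{2}=\bar{\beta}$. Now I would invoke Section 2 twice, with ``$b$'' taken equal to $t$ and the moving parameter evaluated at $t$; this is the boundary case $s=b$ of the admissible range $s\leq b$, where the estimates of Section 2 persist. Freezing $\gamma_{1}(t)$ controls $\beta_{2}$ against $\bar{\beta}$, and interchanging the roles of $\gamma_{1},\gamma_{2}$ controls $\beta_{1}$ against the same $\bar{\beta}$. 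Thus for $k=0,1$ Section 2 gives $r^{*}\cos\bar{\beta}\geq\bar{r}^{*}\cos\beta_{1}$ and $r^{*}\cos\bar{\beta}\geq\bar{r}^{*}\cos\beta_{2}$, and summing yields $$2\,r^{*}\cos\bar{\beta}\geq\bar{r}^{*}(\cos\beta_{1}+\cos\beta_{2}),$$ that is $(r^{*}/\bar{r}^{*})'\leq0$, which is (A); for $k=-1$ Section 2 gives $\cos\bar{\beta}\geq\cos\beta_{1}$ and $\cos\bar{\beta}\geq\cos\beta_{2}$, and summing yields $2\cos\bar{\beta}\geq\cos\beta_{1}+\cos\beta_{2}$, i.e. $(\psi^{*})'\geq0$, which is (B). Conclusion (B) for $k=0,1$ then follows from (A) and $r^{*}\leq\bar{r}^{*}$ exactly as in Section 1. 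I would also note that the ranges match: with $b=t$ the restriction $b\leq\pi/2$ needed for $k=1$ becomes precisely $t\leq\pi/2$, while no upper bound is needed for $k=0,-1$.

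The one delicate point --- the main obstacle --- is the differentiability of $r^{*}$: the first variation identity is valid only where $\gamma_{1}(t)$ and $\gamma_{2}(t)$ avoid each other's cut locus. I would dispose of this by the same perturbation device used in Section 2, replacing $\gamma_{3}$ by a geodesic stopping just short of the cut point so that the point becomes smooth, running the now strict inequalities there (using also $r^{*}<\bar{r}^{*}$), and passing to the limit by a continuity argument to recover that $r^{*}/\bar{r}^{*}$ is non-increasing and $\psi^{*}$ is non-decreasing. Everything else is the routine bookkeeping of the two first--variation terms, which becomes transparent once the symmetric comparison triangle is identified.
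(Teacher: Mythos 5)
Your proposal is correct and takes essentially the same route as the paper: the paper likewise differentiates via first variation at both endpoints ($r^{*\prime}=\cos\beta+\cos\gamma$, $\bar{r}^{*\prime}=2\cos\bar{\beta}$ using the symmetric isosceles comparison triangle), establishes exactly the specialization $b=t$ of the Section 2 hinge estimates (e.g.\ $\cos\bar{\beta}=\bar{r}^{*}/(2t)$ and $\cos\beta,\cos\gamma\leq r^{*}/(2t)$ for $k=0$), and disposes of cut points by the same perturbation trick. Your only deviation is presentational --- invoking the Section 2 inequalities at the boundary case $s=b=t$ twice and summing, rather than re-deriving the law-of-cosines computations --- which is the same argument.
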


The proof is similar to that of Theorem 1.1.

In addition, we write $\gamma=\measuredangle(\gamma_{1}^{'}(t), -\gamma_{3}^{'}(0)$ and
$\bar{\gamma}=\measuredangle(\bar{\gamma}_{1}^{'}(t), -\bar{\gamma}_{3}^{'}(0))=\bar{\beta}$.

If $\gamma_{2}(t)$ is not a cut point of $\gamma_{1}(t)$, then $r^{*}(t)$ is smooth, the first variation formula yields $$r^{*'}(t)=\cos\beta+\cos\gamma,\ \bar{r}^{*'}(t)=2\cos \bar{\beta}.$$
We can show $$(\frac{r^{*}(t)}{\bar{r}^{*}(t)})^{'}=\frac{r^{*'}\bar{r}^{*}-r^{*}\bar{r}^{*'}}{\bar{r}^{*2}}=\frac{\bar{r}^{*}(\cos\beta+\cos\gamma)-2r^{*}\cos\bar{\beta}}{\bar{r}^{*2}}\leq0.$$
when $k=0, 1$ and $$\psi^{*'}(t)=\bar{r}^{*'}(t)-r^{*'}(t)=2\cos\bar{\beta}-(\cos\beta+\cos\gamma)\geq0.$$
when $k=-1$.

\textbf{(1) Case $k=0$.} Note that $\cos\bar{\beta}=\frac{\bar{r}^{*}}{2t},\ \cos\beta\leq\frac{r^{*}}{2t},\ \cos\gamma\leq\frac{r*}{2t}.$
Then $$2r^{*}\cos\bar{\beta}-\bar{r}^{*}(\cos\beta+\cos\gamma)\geq0.$$

\textbf{(2) Case $k=1$.} Note that $$\cos\bar{\beta}=\frac{\cos t(1-\cos\bar{r}^{*})}{\sin\bar{r}^{*} \sin t}$$ and $$\cos\beta,\ \cos\gamma\leq\frac{\cos t(1-\cos r^{*})}{\sin r^{*} \sin t}.$$
Then
\begin{eqnarray*}
2r^{*}\cos\bar{\beta}-\bar{r}^{*}(\cos\beta+\cos\gamma) &\geq& \frac{2r^{*}\bar{r}^{*}\cos t}{\sin t}(\frac{1-\cos \bar{r}^{*}}{\bar{r}^{*}\sin \bar{r}^{*}}-\frac{1-\cos r^{*}}{r^{*}\sin r^{*}})\\\\
&\geq&0.
\end{eqnarray*}
when $t\leq\pi/2$.

\textbf{(3) Case $k=-1$.} Note that $$\cos\bar{\beta}=\frac{(\cosh\bar{r}^{*}-1)\cosh t}{\sinh\bar{r}^{*} \sinh t}$$ and $$\cos\beta, \ \cos\gamma\leq\frac{(\cosh r^{*}-1)\cosh t}{\sinh r^{*} \sinh t}.$$
Then
\begin{eqnarray*}
2\cos\bar{\beta}-(\cos\beta+\cos \gamma) &\geq& \frac{2\cosh t}{\sinh\bar{r}^{*} \sinh r^{*}\sinh t}[\sinh(r^{*}-\bar{r}^{*})+\sinh\bar{r}^{*}-\sinh r^{*}]\\
&\geq&0.
\end{eqnarray*}

If $\gamma_{2}(t)$ is a cut point of $\gamma_{1}(t)$, there would be more than one geodesic segment from $\gamma_{1}(t)$ to $\gamma_{2}(t)$. Using similar arguments in  Petersen \cite{[P]} (Page 224, Exercise 5.9.28.), we can show $$r^{*'}_{+}(t)=\min (\cos\beta+\cos\gamma),\ r^{*'}_{-}(t)=\max (\cos\beta+\cos\gamma).$$   So $$(\frac{r^{*}(t)}{\bar{r}^{*}(t)})^{'}_{+}\leq 0, \ (\frac{r^{*}(t)}{\bar{r}^{*}(t)})^{'}_{-}\leq 0$$ and
$$\psi^{*'}_{+}(t)\geq0, \ \psi^{*'}_{-}(t)\geq0.$$

By the result of Miller-Vyborny \cite{[MV]}, we complete the proof of Theorem 3.1.

When $M$ has nonnegative sectional curvature. From Theorem 1.1 and 3.1, we obtain
\begin{corollary}
(1) Let $d=d_{M}(\gamma_{1}(b), \gamma_{2}(b))$ and $\measuredangle(\gamma_{1}^{'}(0), \gamma_{2}^{'}(0))=\alpha$. Then $$r(t)\geq d\cos\frac{\alpha}{2}.$$

(2) Let $d_{i}=d_{M}(\gamma_{1}(l_{i}), \gamma_{2}(l_{i})), i=1,2, l_{1}<l_{2}$. Then $$d_{1}\geq\frac{l_{1}}{l_{2}}d_{2}.$$
\end{corollary}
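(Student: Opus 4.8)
The plan is to deduce both inequalities directly from the monotonicity statements already proved, the only new ingredient being an explicit computation of the model distances in the flat space $S^{n}_{0}$ via the Euclidean law of cosines. Part (2) will follow from Theorem 3.1 and part (1) from Theorem 1.1, in each case specialized to $k=0$.

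First I would treat part (2), which is the cleaner of the two. In the notation of Theorem 3.1 we have $d_{i}=r^{*}(l_{i})$, while in $S^{n}_{0}$ the law of cosines gives $\bar{r}^{*}(t)^{2}=t^{2}+t^{2}-2t^{2}\cos\alpha=4t^{2}\sin^{2}(\alpha/2)$, so that $\bar{r}^{*}(t)=2t\sin(\alpha/2)$. Theorem 3.1(A) states that $r^{*}(t)/\bar{r}^{*}(t)$ is non-increasing for $t\geq0$; applying this to $l_{1}<l_{2}$ yields $\frac{d_{1}}{2l_{1}\sin(\alpha/2)}\geq\frac{d_{2}}{2l_{2}\sin(\alpha/2)}$, which rearranges to $d_{1}\geq\frac{l_{1}}{l_{2}}d_{2}$.

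For part (1) I would note that $d=r(b)$. By Theorem 1.1(A) with $k=0$ the ratio $r(t)/\bar{r}(t)$ is non-increasing on $t\leq b$, hence $\frac{r(t)}{\bar{r}(t)}\geq\frac{r(b)}{\bar{r}(b)}$. The Euclidean law of cosines gives $\bar{r}(t)^{2}=b^{2}+t^{2}-2bt\cos\alpha$, and in particular $\bar{r}(b)=2b\sin(\alpha/2)$, so $r(t)\geq\frac{d}{2b\sin(\alpha/2)}\bar{r}(t)$. To convert this into a $t$-independent lower bound I would minimize $\bar{r}(t)$: the quadratic $\bar{r}(t)^{2}$ attains its minimum $b^{2}\sin^{2}\alpha$ at the perpendicular foot $t=b\cos\alpha$, so $\bar{r}(t)\geq b\sin\alpha$. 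Substituting and using $\sin\alpha=2\sin(\alpha/2)\cos(\alpha/2)$ gives $r(t)\geq\frac{d}{2b\sin(\alpha/2)}\cdot b\sin\alpha=d\cos(\alpha/2)$.

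The computations themselves are routine, and I expect the only point needing care to be the uniform lower bound for $\bar{r}(t)$ in part (1). Since the desired conclusion is independent of $t$, one must replace $\bar{r}(t)$ by its minimum over the admissible range $t\in[0,b]$. The minimizer $t=b\cos\alpha$ lies in $[0,b]$ exactly when $\alpha\leq\pi/2$, the case in which the stated bound $d\cos(\alpha/2)$ is sharp; for $\alpha>\pi/2$ the function $\bar{r}(t)$ is increasing in $t$, so the relevant extreme is $t=0$ with $\bar{r}(0)=b$, and the inequality then follows a fortiori from $\sin\alpha\leq1$. This case distinction, rather than any calculation, is the main thing to get right.
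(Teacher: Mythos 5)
Your proposal is correct and takes essentially the same route as the paper: part (2) is exactly the paper's one-line application of Theorem 3.1(A) with $\bar{r}^{*}(t)=2t\sin(\alpha/2)$, and part (1) is the paper's chain $r(t)\geq\frac{d}{\bar{d}}\bar{r}(t)\geq d\cos\frac{\alpha}{2}$, where the paper compresses your minimization of $\bar{r}(t)$ into the bound $\bar{r}\geq\bar{d}\sin\bar{\beta}$ with $\sin\bar{\beta}=\cos(\alpha/2)$. Your case distinction at $\alpha=\pi/2$ merely unpacks that one-line fact and is avoidable: $\bar{r}(t)$ is at least the distance $b\sin\alpha=\bar{d}\cos(\alpha/2)$ from $\bar{\gamma}_{1}(b)$ to the whole line through $\bar{\gamma}_{2}$, for every $\alpha$ and every $t$.
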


\begin{proof}
(1): By (A) of Theorem 1.1, $r\geq\frac{\bar{r}}{\bar{d}}d\geq d\sin\bar{\beta}=d\cos\frac{\alpha}{2}$.

(2): By (A) of Theorem 3.1, $d_{1}\geq\frac{\bar{d}_{1}}{\bar{d}_{2}}d_{2}=\frac{l_{1}}{l_{2}}d_{2}$.
\end{proof}

\bibliographystyle{amsplain}

\end{document}